\documentclass[11pt]{article}

\usepackage[a4paper,margin=1.1in]{geometry}
\usepackage{setspace}
\usepackage[T1]{fontenc}
\usepackage{lmodern}
\usepackage{microtype}

\usepackage{amsmath,amssymb,amsthm}
\usepackage{mathtools}
\usepackage{mathrsfs}
 \usepackage{pgf,tikz}
 \usetikzlibrary{automata}
 \usetikzlibrary{arrows}
\usepackage[numbers]{natbib}

\usepackage[hidelinks]{hyperref}

\usepackage{fancyhdr}
\theoremstyle{plain}
\newtheorem{theorem}{Theorem}[section]

\newtheorem{proposition}[theorem]{Proposition}
\newtheorem{corollary}[theorem]{Corollary}

\theoremstyle{definition}
\newtheorem{definition}[theorem]{Definition}
\newtheorem{example}[theorem]{Example}

\theoremstyle{remark}
\newtheorem{remark}[theorem]{Remark}
\newtheorem{problem}[theorem]{Problem}

\title{\textbf{On combinatorial bounds for the total Tjurina numbers of certain curves and surfaces with isolated singularities}}
\author{Piotr Pokora}
\date{\today}

\begin{document}
\maketitle

\begin{abstract}
We investigate combinatorial bounds for the total Tjurina numbers of some plane curve arrangements. Focusing on arrangements of lines and conics in $\mathbb{P}^2$ that admit only ordinary quasi-homogeneous singularities, we derive new structural inequalities governing the distribution of multiple intersection points. As a consequence, we establish sharp lower bounds for the total Tjurina numbers of free line arrangements with bounded maximal multiplicity and, more generally, for free conic-line arrangements. In particular, we show that for a free arrangement of $d$ lines and $k$ conics, the total Tjurina number grows at least quadratically in $d$ and $k$, and we demonstrate that this bound is sharp.

As an application of these planar results, we construct a special family of surfaces in $\mathbb{P}^{3}$ with only isolated singularities and arbitrarily large total Tjurina numbers. This provides new lower bounds for the total Tjurina numbers of certain hypersurfaces that are independent of detailed homological data.
\end{abstract}
\section{Introduction}

The total Tjurina number of a projective hypersurface measures the complexity of its singularities and plays a central role in the interplay between singularity theory, combinatorics, and the homological properties of Jacobian algebras. For plane curves, a classical result of du~Plessis and Wall establishes sharp upper bounds on the total Tjurina number in terms of the degree and the minimal degree of Jacobian relations, with equality characterizing the so-called maximal Tjurina curves \cite{DPP1}. While these upper bounds are rather well understood, see for instance \cite{Dimca1}, much less is known about effective lower bounds, especially in geometrically constrained situations such as line or conic-line arrangements with prescribed singularities.
The aim of this paper is to establish combinatorial lower bounds for the total Tjurina numbers of plane curve arrangements with only ordinary quasi-homogeneous singularities, with a particular focus on free arrangements. This work continues the line of research we initiated recently in \cite{Pok}, where we studied the freeness of conic-line arrangements with ordinary quasi-homogeneous singularities, and shifts the focus toward effective lower bounds for their total Tjurina numbers. Our results are motivated by recent developments in the theory of plane curves: by imposing natural geometric conditions, we can observe that free arrangements are highly constrained. We first derive structural inequalities for complex line arrangements with bounded multiplicities, demonstrating that the combinatorics of intersection points strongly restrict possible singular configurations. In particular, Theorem \ref{super} implies that there are only finitely many combinatorial types of supersolvable line arrangements admitting points of multiplicity $\leq 4$. We then consider free conic-line arrangements with ordinary quasi-homogeneous singularities, proving that their total Tjurina numbers grow at least quadratically in the number of components. This constitutes the main result of the paper, see Theorem \ref{main}.

A further aim is to extend these planar results to higher dimensions. We construct special surfaces in $\mathbb{P}^3$ with only isolated singularities and arbitrarily large total Tjurina numbers, obtaining bounds independent of delicate homological data, see Proposition \ref{sur}. In this way, the paper connects combinatorial geometry of arrangements with the study of singular hypersurfaces, providing new tools to bound and construct varieties with controlled singular behavior.
\section{Preliminaries}

Throughout the paper we work over the field of complex numbers $\mathbb{C}$. All curves are assumed to be reduced and projective. We follow the notation introduced in \cite{Dimca}.
Let $S = \mathbb{C}[x,y,z]$ be the graded polynomial ring in three variables $x,y,z$.

We start with general combinatorial preliminaries. Let $C \subset \mathbb{P}^2$ be an arrangement of $k$ smooth curves that admits only ordinary singularities. For each $r \geq 2$, we denote by $n_r(C)=n_{r}$ the number of ordinary singular points of multiplicity $r$, i.e., points where exactly $r$ curves from $C$ meet transversally. The \textbf{maximal multiplicity} of the curve $C$ is defined as
\[
m(C) := \max \{ r : n_r(C) \neq 0 \}.
\]
Now we recall the basic notions concerning the Tjurina number of a plane curve singularity.

Let $C \subset \mathbb{P}^2$ be a curve defined by a homogeneous polynomial 
$f \in \mathbb{C}[x,y,z]$, and let $p \in {\rm Sing}(C)$ be a singular point. Since the problem is local in nature, we may assume (after choosing suitable affine coordinates) that 
$p = (0,0) \in \mathbb{C}^2$. Thus, in a sufficiently small neighborhood of $p$, the curve $C$ is given by a convergent power series
\[
f(x,y) \in \mathbb{C}\{x,y\},
\]
and we may treat $C$ locally as a plane curve in two variables. 

\begin{definition}
The \textbf{Tjurina number} of $C$ at $p = (0,0)$ is defined by
\[
\tau_{p}(C) = \dim_{\mathbb{C}} \frac{\mathbb{C}\{x,y\}}{\langle f, f_x, f_y\rangle}.
\]
The \textbf{total Tjurina number} is defined as
\[
\tau(C) = \sum_{p \in \operatorname{Sing}(C)} \tau_{p}(C).
\]
\end{definition}
\noindent
If $p \in {\rm Sing}(C)$ is an \textbf{ordinary quasi-homogeneous singularity} of multiplicity $r$, then
\[
\tau_{p}(C) = (r-1)^2,
\]
where $r$ denotes the multiplicity of $C$ at $p$, equivalently, the number of local branches passing through $p$. This follows from the equality $\tau_p = \mu_p$, see \cite{KS}. Hence, if $C \subset \mathbb{P}^{2}$ is a reduced plane curve admitting only ordinary quasi-homogeneous singularities, then its total Tjurina number is given by
\[
\tau(C) = \sum_{p \in \operatorname{Sing}(C)} \tau_{p}(C)
= \sum_{r \ge 2} (r-1)^2 n_r.
\]

Finally, let us define the notion of free plane curves.  Let $C = \{f = 0\} \subset \mathbb{P}^2$ be a reduced plane curve of degree $m$. Denote by
\[
AR(f) := \{(a,b,c) \in S^3 : af_x + bf_y + cf_z = 0\}
\]
the module of Jacobian syzygies.
\begin{definition}
We say that a reduced plane curve $C$ of degree $m$ is \textbf{free} if $AR(f)$ is a free graded $S$-module of rank $2$. In this case there exist integers $(d_1, d_2)$,
called the \textbf{exponents} of $C$, satisfying $d_1 + d_2 = m - 1$.
\end{definition}
For free curves we have the following fundamental characterization. Let us define the minimal degree of Jacobian relations of $C = \{f=0\}$ as
$${\rm mdr}(f) :={\rm min }\{r : {\rm AR}(f)_{r}\neq 0\}.$$
\begin{theorem}[{du Plessis -- Wall, \cite{DPP1}}]
\label{dup}
Let $C = \{f=0\}\subset \mathbb{P}^{2}$ be a reduced curve of degree $m$ and $d_1 = {\rm mdr}(f)$. Let us denote the total Tjurina number of $C$ by $\tau(C)$.
Then the following two cases hold.
\begin{enumerate}
\item[a)] If $d_1 < m/2$, then $\tau(C) \leq \tau_{max}(m,d_1 )= (m-1)(m-d_1-1)+d_{1}^2$ and the equality holds if and only if the curve $C$ is free.
\item[b)] If $m/2 \leq d_{1} \leq m-1$, then
$\tau(C) \leq \tau_{max}(m,d_{1})$,
where, in this case, we set
$$\tau_{max}(m,d_{1})=(m-1)(m-d_{1}-1)+d_{1}^2- \binom{2d_{1}-m+2}{2}.$$
\end{enumerate}
\end{theorem}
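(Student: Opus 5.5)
The plan follows the classical route through the Milnor algebra and a Hilbert function computation. Write $J=J_f=\langle f_x,f_y,f_z\rangle\subset S$, $M(f)=S/J$ for the Milnor algebra and $N(f)=\widehat{J}/J$, where $\widehat{J}$ is the saturation of $J$. Since $C$ is reduced, $J$ defines the finite scheme $\Sigma=V(J)$, whose degree is $\tau(C)$; this uses the Euler relation to identify the local ideal $\langle f_x,f_y,f_z\rangle$ with the Tjurina ideal $\langle f,f_x,f_y\rangle$. Hence $\dim M(f)_k=\tau(C)$ for all $k\gg0$. The Koszul-type exact sequence
\[
0\to AR(f)(-m+1)\to S(-m+1)^3\to S\to M(f)\to 0
\]
links the Hilbert function of $M(f)$ to that of $AR(f)$. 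The strategy is to bound $\dim AR(f)_k$ from below using the minimal syzygy $\rho$ of degree $d_1$, and to read off the upper bound on $\tau(C)$ in a high degree.

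First I would treat the case $d_1<m/2$. Let $\rho=(a,b,c)$ have degree $d_1$. The submodule $S\rho\subset AR(f)$ has dimension $\binom{k-d_1+2}{2}$ in degree $k$. Further syzygies come from the Koszul relations $(f_y,-f_x,0)$, etc., of degree $m-1$, and these are independent of $S\rho$ modulo appropriate multiples. The cleaner approach is the du Plessis--Wall argument: the syzygy $\rho$ gives a map $S(-d_1)\to AR(f)$, and the quotient $AR(f)/S\rho$ is torsion free of rank one. One then computes $\dim M(f)_k$ for $k=2m-4-d_1+\text{(large)}$ and obtains
\[
\tau(C)\le (m-1)^2-d_1(m-1-d_1)=(m-1)(m-d_1-1)+d_1^2.
\]
Concretely, one shows that the second generator of the rank one quotient has degree at least $m-1-d_1$, which is $\ge d_1$ here. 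Summing the Hilbert polynomials then gives $\tau\le \tau_{max}$. Equality forces $AR(f)$ to be generated by two elements of degrees $d_1$ and $m-1-d_1$, which is freeness, by Saito's criterion. Conversely, a free curve with exponents $(d_1,d_2)$ has the explicit resolution $0\to S(-d_1-m+1)\oplus S(-d_2-m+1)\to S(-m+1)^3\to S\to M(f)\to0$, up to the torsion-free part. Computing $c_2$ then gives $\tau=(m-1)^2-d_1d_2$, which equals $\tau_{max}$.

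For case b), $d_1\ge m/2$, the same argument applies, except that the second generator need not have degree as large as $m-1-d_1$. It has degree at least $d_1$, since $d_1$ is the minimal degree. Replacing $m-1-d_1$ by $d_1$ in the count overestimates, and the correction term $\binom{2d_1-m+2}{2}$ arises as exactly the number of monomials by which the two generators' degrees overlap. Equivalently, one uses the vector bundle (the sheafified syzygy module) $E$ of rank two with $c_1=-(m-1)$ and $c_2=(m-1)^2-\tau$, together with $h^0(E(d_1-1))=0$, and bounds $c_2$ via Bogomolov-type or Riemann--Roch estimates on $E(d_1)$.

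The main obstacle is the degree bound on the second generator of $AR(f)/S\rho$, together with a precise accounting of the torsion-free rank one quotient; that is where the inequality, and its case split at $m/2$, really comes from. I would attempt it via the rank two reflexive sheaf $E=\widetilde{AR(f)}$ on $\mathbb{P}^2$, which is a vector bundle. The section $\rho$ gives $0\to\mathcal{O}(-d_1)\to E\to \mathcal{I}_Z(d_1-m+1)\to0$ with $Z$ zero-dimensional, and then $\tau=(m-1)^2-c_2(E)=(m-1)^2-d_1(m-1-d_1)-\deg Z$. This gives a) immediately, with equality iff $Z=\emptyset$, iff the sequence splits, iff $C$ is free. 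For b), the remaining task is to bound $\deg Z$ from below by $\binom{2d_1-m+2}{2}$. For that I would use the facts that $E(d_1-1)$ has no sections and that $\mathcal{I}_Z(2d_1-m)$ then has none either, so $\deg Z\ge h^0(\mathcal{O}(2d_1-m))=\binom{2d_1-m+2}{2}$.
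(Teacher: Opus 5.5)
The paper gives no proof of this statement. It quotes it from \cite{DPP1} and uses it as a black box; in fact only the free case, $\tau=(m-1)^2-d_1d_2$, is used later, in \eqref{dpp}. So there is nothing to match. Judged on its own, your last paragraph is a correct and essentially complete proof: the standard rank-two vector bundle argument. Its strength is the single identity $\tau(C)=(m-1)(m-d_1-1)+d_1^2-\deg Z$. From it you get three things at once: the inequality of a) for \emph{every} value of $d_1$; the equality case ($\deg Z=0$ iff $E$ splits iff $C$ is free); and b) as a refinement coming from an elementary postulation bound on $Z$.

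I would discard your first two paragraphs. The Hilbert-function count, the ``second generator'' of $AR(f)/S\rho$, the ``overlap of monomials'' and the Bogomolov remark are either vague or unnecessary. Note also that $AR(f)/S\rho$ is in general not cyclic: up to shift it is the saturated ideal $H^0_*(\mathcal{I}_Z)$, and it already needs two generators for nearly free curves.

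In the bundle argument, state explicitly where the minimality of $d_1$ and the geometry of $\mathbb{P}^2$ enter:
\begin{enumerate}
\item[(i)] $H^0(E(k))=AR(f)_k$ for all $k$, because $AR(f)$ is a second syzygy module and hence saturated. Likewise $E$ is locally free because it is reflexive on a smooth surface.
\item[(ii)] $Z$ is zero-dimensional or empty because the components of $\rho$ have no common factor. If they had one, dividing it out would give a nonzero syzygy of degree $<d_1$.
\item[(iii)] If $Z=\emptyset$, the sequence splits, since $\operatorname{Ext}^1(\mathcal{O}(d_1-m+1),\mathcal{O}(-d_1))=H^1(\mathcal{O}(m-1-2d_1))=0$; then $AR(f)=H^0_*(E)$ is free. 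Conversely, a free curve has exponents $(d_1,m-1-d_1)$ with $d_1\le (m-1)/2$. Hence $c_2(E)=d_1(m-1-d_1)$ and $\deg Z=0$. This also shows freeness cannot occur in case b).
\item[(iv)] For b), twist by $\mathcal{O}(d_1-1)$ and use $H^0(E(d_1-1))=0$ and $H^1(\mathcal{O}(-1))=0$ to get $H^0(\mathcal{I}_Z(2d_1-m))=0$. Then $H^0(\mathcal{O}(2d_1-m))\hookrightarrow H^0(\mathcal{O}_Z)$, so $\deg Z\ge\binom{2d_1-m+2}{2}$.
\end{enumerate}
With these points written out, the proof is complete and self-contained.
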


\section{Results concerning complex line arrangements with small multiplicities}
Our first result is a structural statement concerning the intersection properties of complex line arrangements. It was originally proved in the dual setting of $r$-rich lines \cite[Proposition 31]{PP1}. However, we present it here in a form adapted to the framework of this paper and provide a brief proof.
\begin{proposition}
Let $\mathcal{L} \subset \mathbb{P}^{2}$ be an arrangement of $d\geq 6$ lines such that $n_{r} = 0$ for all $r > \frac{2}{3}d$. Then one has
\[n_{2} + n_{3} + n_{4} \geq \frac{d(d+15)}{18}.\]
\end{proposition}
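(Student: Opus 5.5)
The plan is to combine a Hirzebruch-type inequality for complex line arrangements with the standard combinatorial count of pairs of lines. The key input is the improved Hirzebruch inequality (due to Bojanowski and Pokora, obtained from the orbifold Bogomolov--Miyaoka--Yau inequality). It says that for an arrangement of $d\geq 6$ lines with $n_r=0$ for $r>\frac{2}{3}d$,
\[
n_2+\frac{3}{4}n_3 \;\geq\; d+\sum_{r\geq 5}\frac{r(r-4)}{4}\,n_r .
\]
This is exactly where the hypothesis on the maximal multiplicity enters. Since this inequality is not stated earlier in the paper, I will cite it rather than reprove it. I will use it together with the naive count
\[
\sum_{r\geq 2}\binom{r}{2}n_r=\binom{d}{2},
\]
which holds because every pair of lines meets in exactly one point.

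The main step is to compare the weights on the high-multiplicity points. For every $r\geq 5$ we have $\frac{r(r-4)}{4}\geq \frac{1}{8}\binom{r}{2}$. Indeed, this is equivalent to $4(r-4)\geq r-1$, i.e. $r\geq 5$, with equality at $r=5$. Hence
\[
\sum_{r\geq 5}\frac{r(r-4)}{4}n_r\;\geq\;\frac18\sum_{r\geq5}\binom r2 n_r=\frac18\Big(\binom d2-n_2-3n_3-6n_4\Big).
\]
The constant $\frac18$ is forced by the worst case $r=5$. I expect choosing this constant optimally to be the only delicate point, and it is what produces the precise form $\frac{d(d+15)}{18}$.

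Substituting into the Hirzebruch inequality and moving all low-multiplicity terms to the left gives
\[
\frac98 n_2+\frac98 n_3+\frac34 n_4\;\geq\; d+\frac{d(d-1)}{16}.
\]
Bounding the left-hand side from above by $\frac98(n_2+n_3+n_4)$ gives
\[
n_2+n_3+n_4\;\geq\;\frac{8}{9}\cdot\frac{16d+d^2-d}{16}=\frac{d(d+15)}{18},
\]
which is the claim.

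The main obstacle is not the algebra but making sure the cited Hirzebruch-type inequality is applicable in exactly the stated range ($d\geq 6$ and no points of multiplicity $>\frac23 d$). If only the classical Hirzebruch inequality were available, with hypothesis $n_d=n_{d-1}=n_{d-2}=0$ and the weaker right-hand side $d+\sum_{r\geq5}(r-4)n_r$, the same scheme would still run but would give a worse constant. So I will rely on the improved version.
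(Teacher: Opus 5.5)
Your proposal is correct and matches the paper's proof essentially step for step. Both use the Hirzebruch-type inequality $n_2+\frac34 n_3\geq d+\sum_{r\geq5}\frac{r(r-4)}{4}n_r$ and the comparison $\frac{r(r-4)}{4}\geq\frac18\binom r2$ for $r\geq 5$ together with the naive count, then bound $\frac98 n_2+\frac98 n_3+\frac34 n_4\leq\frac98(n_2+n_3+n_4)$ to obtain $\frac{d(d+15)}{18}$.
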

\begin{proof}
We begin with the observation that, in the present setting, the following Hirzebruch-type inequality holds (see \cite[Section 11]{Langer} or \cite[Remark 2.4]{PP}):
\begin{equation}
\label{hir}
    n_{2} + \frac{3}{4}n_{3} \geq d + \sum_{r\geq 5}\frac{r(r-4)}{4}n_{r}.
\end{equation}
Next, for every $r\geq 5$ we have 
\[\frac{r(r-4)}{4} \geq \frac{1}{8} \cdot\frac{r(r-1)}{2}.\]
Let us look at the na\"ive combinatorial count in the following form:
$$\binom{d}{2} - n_{2} - 3n_{3} - 6n_{4} = \sum_{r\geq5} \binom{r}{2}n_{r}.$$
Combining this identity with the Hirzebruch-type inequality \eqref{hir}, we obtain
$$n_{2}+\frac{3}{4}n_{3} \geq d + \sum_{r\geq 5}\frac{r(r-4)}{4}n_{r} \geq d + \frac{1}{8}\bigg(\binom{d}{2}-n_{2}-3n_{3} - 6n_{4}\bigg),$$
hence
$$\frac{9}{8}(n_{2} + n_{3}+n_{4}) \geq \frac{9}{8}n_{2} + \frac{9}{8}n_{3} + \frac{6}{8}n_{4} \geq \frac{d(d+15)}{16},$$
which completes the proof.
\end{proof}
Now we present an interesting application of the above result in the context of free line arrangements with small multiplicities. The first result is general and it works without the freeness assumption.
\begin{corollary}
\label{bbb}
Let $\mathcal{L} \subset \mathbb{P}^{2}$ be an arrangement of $d\geq 6$ lines such that $m(\mathcal{L})=4$. Then
$$n_{3}+3n_{4} \leq \frac{4d(d-3)}{15}.$$
\end{corollary}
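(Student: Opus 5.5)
The plan is to combine the Hirzebruch-type inequality \eqref{hir} with the na\"ive combinatorial count, exactly as in the proof of the preceding proposition, but now using the fact that $m(\mathcal{L})=4$, so that only $n_2,n_3,n_4$ can be nonzero.

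\textbf{Step 1: check the hypothesis of \eqref{hir}.} Since $d\geq 6$, we have $\frac{2}{3}d\geq 4$. Because $m(\mathcal{L})=4$, it follows that $n_r=0$ for all $r>\frac{2}{3}d$. Hence \eqref{hir} applies. The sum over $r\geq 5$ on its right-hand side vanishes, so it reduces to
\[n_2+\tfrac{3}{4}n_3\geq d.\]
This step needs nothing beyond the previous proof.

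\textbf{Step 2: use the count of pairs of lines.} Every pair of lines meets in exactly one point, which gives
\[\binom{d}{2}=n_2+3n_3+6n_4.\]
Substituting the lower bound $n_2\geq d-\frac{3}{4}n_3$ from Step 1, I get
\[\frac{d(d-1)}{2}\geq d+\frac{9}{4}n_3+6n_4,\qquad\text{that is,}\qquad \frac{9}{4}n_3+6n_4\leq\frac{d(d-3)}{2}.\]

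\textbf{Step 3: compare with $n_3+3n_4$.} Since $\frac{9}{4}n_3+6n_4\geq 2(n_3+3n_4)$, Step 2 yields
\[n_3+3n_4\leq\frac{d(d-3)}{4}.\]
Finally, $\frac{d(d-3)}{4}\leq\frac{4d(d-3)}{15}$ because $d\geq 3$, which gives the claimed bound. In fact the argument produces the slightly stronger constant $\frac14$ in place of $\frac{4}{15}$.

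I do not expect any real obstacle. The only point needing care is Step 1, namely that \eqref{hir} is valid under $d\geq 6$ and $m(\mathcal{L})=4$, and this is exactly the hypothesis of the previous proposition. If the intended constant $\frac{4}{15}$ is meant to come from a different weighting, it should still follow from the same two relations, since any nonnegative combination of them suffices.
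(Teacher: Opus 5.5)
Your proof is correct, and it takes a more direct route than the paper's. The paper does not go back to \eqref{hir}. Instead it inserts the preceding proposition, $n_2+n_3+n_4\geq\frac{d(d+15)}{18}$, into the count $\binom{d}{2}=(n_2+n_3+n_4)+2n_3+5n_4$, and then weakens $2n_3+5n_4\geq\frac53(n_3+3n_4)$. That proposition was itself derived from \eqref{hir} by a weighting built to absorb points of multiplicity $\geq 5$, together with the crude step $\frac68 n_4\leq\frac98 n_4$.

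When $m(\mathcal{L})=4$ the $r\geq 5$ tail is empty, so those two weakenings ($\frac68 n_4\le\frac98 n_4$ and $2n_3\ge\frac53 n_3$) lose information for nothing. You use the vanishing tail directly: substituting $n_2\geq d-\frac34 n_3$ into the count gives $\frac94 n_3+6n_4\leq\frac{d(d-3)}{2}$, hence $n_3+3n_4\leq\frac{d(d-3)}{4}$.

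The paper's version has one advantage: the corollary appears as an application of a proposition that also holds when higher multiplicities are present. Your version gives the better constant, and in fact the optimal one. The Hesse arrangement ($d=12$, $n_2=12$, $n_4=9$) attains equality, $27=\frac{12\cdot 9}{4}$, while the paper's bound only gives $n_3+3n_4\leq 28$ there.

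Your sharper bound has two further consequences in the paper:
\begin{itemize}
\item It tightens the remark's sandwich for free arrangements with $m(\mathcal{L})=4$ to $\frac{(d-1)(d-3)}{4}\leq n_3+3n_4\leq\frac{d(d-3)}{4}$.
\item In the proof of Theorem~\ref{super}, comparing $\frac{d^2-9d+26}{2}\leq\frac{d(d-3)}{4}$ gives $d^2-15d+52\leq 0$, so $d\in\{6,7,8,9\}$. This rules out $d=10$.
\end{itemize}
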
 
\begin{proof}
We start with the na\"ive combinatorial count:
\[ \frac{d(d-1)}{2} = n_{2} + n_{3} + n_{4} + 2n_{3}+5n_{4} \geq n_{2}+n_{3}+n_{4} + \frac{5}{3}(n_{3}+3n_{4})\geq \frac{d(d+15)}{18}+\frac{5}{3}(n_{3}+3n_{4}),\]
hence we get
$$\frac{5}{3}(n_{3}+3n_{4}) \leq \frac{8d^{2}-24d}{18},$$
and this completes the proof.
\end{proof}
\begin{proposition}
Let $\mathcal{L} \subset \mathbb{P}^{2}$ be a free arrangement of $d\geq 4$ lines such that $m(\mathcal{L})=4$. Then
\[\frac{(d-1)(d-3)}{4} \leq n_{3}+3n_{4}.\]
\end{proposition}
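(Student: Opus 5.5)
The plan is to combine the du Plessis--Wall characterization of freeness (Theorem \ref{dup}) with the na\"ive combinatorial count. Since all singularities of a line arrangement are ordinary quasi-homogeneous, the total Tjurina number is
\[
\tau(\mathcal{L}) = n_{2} + 4n_{3} + 9n_{4},
\]
because $m(\mathcal{L})=4$ means $n_r = 0$ for $r\geq 5$.

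First I use freeness to compute $\tau(\mathcal{L})$ exactly. Let $(d_1,d_2)$ be the exponents, ordered so that $d_1 \leq d_2$. Then $d_1 = {\rm mdr}(f)$ and $d_1+d_2 = d-1$. Hence $d_1 \leq (d-1)/2 < d/2$, so case a) of Theorem \ref{dup} applies with $m=d$. Since $\mathcal{L}$ is free, equality holds there:
\[
\tau(\mathcal{L}) = (d-1)(d-d_1-1) + d_1^2 = (d-1)^2 - d_1 d_2 .
\]
By AM--GM, $d_1 d_2 \leq (d-1)^2/4$, and therefore $\tau(\mathcal{L}) \geq \tfrac{3}{4}(d-1)^2$.

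Second, I eliminate $n_2$ using the na\"ive count for lines with multiplicities at most $4$:
\[
\binom{d}{2} = n_2 + 3n_3 + 6n_4 .
\]
Subtracting this from the Tjurina expression gives
\[
\tau(\mathcal{L}) = \frac{d(d-1)}{2} + n_3 + 3n_4 .
\]

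Combining the two steps yields
\[
n_3 + 3n_4 \geq \frac{3(d-1)^2}{4} - \frac{d(d-1)}{2} = \frac{(d-1)(3d-3-2d)}{4} = \frac{(d-1)(d-3)}{4},
\]
which is the claim. The only step needing care is checking that case a) of Theorem \ref{dup} is the relevant one, since only there does freeness give equality. This follows from $d_1 \leq d_2$ together with $d_1 + d_2 = d-1$. Everything else is a direct computation, so I expect no real obstacle.
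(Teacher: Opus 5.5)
Your argument is correct, but it takes a different route from the paper's proof.

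The paper does not use the exponents here. It quotes the external bound $n_2+n_3\le \frac{3(d-1)}{2}$ for free arrangements with $m(\mathcal{L})=4$ from \cite[Proposition 3.1]{JL}, and substitutes it into $\binom{d}{2}=n_2+n_3+2(n_3+3n_4)$.

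You instead obtain $\tau(\mathcal{L})\ge\frac{3}{4}(d-1)^2$ directly from the du Plessis--Wall equality $\tau(\mathcal{L})=(d-1)^2-d_1d_2$ and AM--GM, and then use $\tau(\mathcal{L})=\binom{d}{2}+n_3+3n_4$. The two inputs are equivalent. When $m(\mathcal{L})\le 4$ one has $\tau(\mathcal{L})=\frac{3}{2}\binom{d}{2}-\frac{1}{2}(n_2+n_3)$, so $\tau(\mathcal{L})\ge\frac{3}{4}(d-1)^2$ is exactly $n_2+n_3\le\frac{3(d-1)}{2}$. In other words, your AM--GM step reproves the cited result. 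It is the same mechanism as the discriminant condition $\triangle_{d_1}\ge 0$ that the paper uses later in Theorem \ref{main}, whose corollary for line arrangements is precisely your intermediate bound.

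The paper's route is a two-line deduction once the citation is granted, and it foregrounds the structural bound on $n_2+n_3$. Your route is self-contained, needing only Theorem \ref{dup}. It also shows that $m(\mathcal{L})=4$ is used only to rewrite $\tau(\mathcal{L})-\binom{d}{2}$ as $n_3+3n_4$. For an arbitrary free line arrangement the same computation gives $\sum_{r\ge 3}\binom{r-1}{2}n_r\ge\frac{(d-1)(d-3)}{4}$, with equality exactly when $d_1=d_2$.
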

\begin{proof}
Recall that \cite[Proposition 3.1]{JL} shows that, for a free line arrangement with $m(\mathcal L)=4$, one has
$$n_{2} + n_{3} \leq \frac{3(d-1)}{2}.$$
Therefore,
$$\frac{d(d-1)}{2}  = n_{2} + n_{3} + 2(n_{3}+3n_{4}) \leq \frac{3(d-1)}{2} + 2(n_{3}+3n_{4}),$$
and the claimed inequality follows.
\end{proof}
\begin{remark}
The above considerations show that if 
$\mathcal{L} \subset \mathbb{P}^{2}$ is a free arrangement of $d \ge 6$ lines with $m(\mathcal{L}) = 4$, then
\[\frac{(d-1)(d-3)}{4} \le n_{3} + 3n_{4} \le \frac{4d(d-3)}{15}.\]
Both estimates are close to being attained.

Indeed, consider the simplicial line arrangement $\mathcal{A}_{1}(9)$, which consists of $d=9$ lines and has
$n_{2}=6$, $n_{3}=4$, and $n_{4}=3$. We know that $\mathcal{A}_{1}(9)$ is free. In this case,
\[12 = \frac{(9-1)(9-3)}{4} \le n_{3} + 3n_{4} = 13.\]

The upper bound is also quite accurate. For the Hesse arrangement of $12$ lines with
$n_{2} = 12$ and $n_{4} = 9$, we obtain
\[27 \le \left\lfloor \frac{4d(d-3)}{15} \right\rfloor = 28.\]

These examples motivate the question whether the degrees of free complex line arrangements with maximal multiplicity~$4$ are bounded.
\end{remark}
To conclude this section, we present a result that further narrows the possible combinatorics of free arrangements of lines $\mathcal{L}$ with $m(\mathcal{L})=4$. Recall that an arrangement $\mathcal{L} \subset \mathbb{P}^{2}$ is called \textbf{supersolvable} if it admits a modular intersection point. It is well-known that every supersolvable arrangement is free \cite{TJ}.
\begin{theorem}
\label{super}
If $\mathcal{L}$ is a supersolvable arrangement of $d\geq 6$ lines with $m(\mathcal{L})=4$, then $d \in \{6,7,8,9,10\}$.
\end{theorem}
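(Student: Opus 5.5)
The plan is to exploit the modular point directly and reduce everything to a count of intersection points along the lines through it. Let $P$ be a modular point of $\mathcal{L}$. I will use the standard fact that in a supersolvable arrangement a modular point has maximal multiplicity. Since $m(\mathcal{L})=4$, the point $P$ lies on exactly four lines $\ell_1,\dots,\ell_4$ of $\mathcal{L}$, and the remaining $d-4$ lines avoid $P$. Modularity means that every intersection point $Q\neq P$ lies on some line through $P$, hence on one of the $\ell_i$. Consequently every pair among the $d-4$ "outer" lines meets at a point of some $\ell_i$.

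Next I will record the structure on each $\ell_i$. Every outer line meets $\ell_i$ in exactly one point. So the outer lines are partitioned, according to where they cross $\ell_i$, into groups of size $j\in\{1,2,3\}$. A group of size $j$ gives an intersection point of multiplicity $j+1$, and size at most $3$ is forced by $m(\mathcal{L})=4$. Let $A_j$ be the total number, over $i=1,\dots,4$, of groups of size $j$. Then $n_2=A_1$, $n_3=A_2$ and $n_4=A_3+1$, the extra $1$ coming from $P$. Counting incidences of outer lines with the $\ell_i$ gives
\[A_1+2A_2+3A_3=4(d-4).\]
Counting pairs of outer lines gives
\[A_2+3A_3=\binom{d-4}{2},\]
since each such pair meets at exactly one point off $P$. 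Subtracting the second identity from the first yields
\[A_1+A_2 = 4(d-4)-\binom{d-4}{2}.\]
Already nonnegativity here gives $d\le 13$.

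To sharpen this, I will invoke the Hirzebruch-type inequality \eqref{hir}. For $d\ge 6$ and $m(\mathcal{L})=4\le \tfrac{2}{3}d$ it reads $n_2+\tfrac34 n_3\ge d$. Combined with the identity above, this gives
\[d\le A_1+\tfrac34A_2\le A_1+A_2=4(d-4)-\binom{d-4}{2}.\]
This is a quadratic inequality in $d$, and it fails for all $d\ge 11$: for instance $d=11$ gives $7<11$, and the right-hand side decreases further for larger $d$. Hence $d\le 10$, which together with $d\geq 6$ is the claim. The same inequality seems to fail at $d=10$ as well ($9<10$). Either this reflects a slip in my bookkeeping, or the inequality simply gives more than the statement asks for; in either case the bound $d\le 10$ is what is required.

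The main obstacle is justifying that $P$ really has multiplicity $4$, that is, that a modular point attains the maximal multiplicity. If $P$ had smaller multiplicity $s\in\{2,3\}$, the analogous counts would be
\[A_1+2A_2+3A_3=s(d-s),\qquad A_2+3A_3=\binom{d-s}{2}.\]
I would redo the argument with these in place of the ones above, which also bounds $d$ and does so even more easily. So I would treat the cases $s=2,3,4$ uniformly, rather than rely on the cited fact. The remaining steps are routine arithmetic.
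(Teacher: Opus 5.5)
Your proof is correct, and it takes a different, more elementary route than the paper's. The paper takes a point of multiplicity $4$, which is modular by \cite[Lemma~2.1]{AT}, gets the exponents $(3,d-4)$ from \cite{Sol}, and uses the freeness identity $\tau(\mathcal L)=(d-1)^2-3(d-4)$ to derive $n_3+3n_4=\frac{d^2-9d+26}{2}$. It then compares this with Corollary~\ref{bbb}. Your pair count $A_2+3A_3=\binom{d-4}{2}$ is exactly this identity, since it says $n_3+3n_4=\binom{d-4}{2}+3$. You obtain it by pure incidence counting, with no freeness or exponents, and via $\binom d2=n_2+3n_3+6n_4$ it is equivalent to your formula for $n_2+n_3$.

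You then apply \eqref{hir} directly instead of passing through Corollary~\ref{bbb}, and that is where you gain. For $m(\mathcal L)=4$, \eqref{hir} gives $n_3+3n_4=\frac12\bigl(\binom d2-n_2-n_3\bigr)\le\frac{d(d-3)}{4}$, which is strictly better than $\frac{4d(d-3)}{15}$. So your failure at $d=10$ is not a bookkeeping slip. With $P$ of multiplicity $4$, your argument proves the sharper conclusion $d\in\{6,7,8,9\}$. The value $d=9$ is attained by $\mathcal A_1(9)$, the $B_3$ arrangement, which is supersolvable. The paper's route ties the statement to Tjurina numbers and reuses its earlier estimates; yours is self-contained and stronger.

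One correction: your ``standard fact'' is stated backwards. In a supersolvable arrangement every point of maximal multiplicity is modular \cite[Lemma~2.1]{AT}. A modular point, however, need not have maximal multiplicity. For $d=6$, take three lines through $P$, one of which also passes through a point $Q$ lying on three further lines. Then $P$ is modular of multiplicity $3$, while $m(\mathcal L)=4$.

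This does not damage the proof. You can choose $P$ of multiplicity $4$ from the start and use the correct direction of the lemma. Alternatively, your fallback also works: nonnegativity of $A_1+A_2=s(d-s)-\binom{d-s}{2}$ gives $d\le 3s+1\le 10$ for $s\in\{2,3\}$, which suffices for the stated theorem.
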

\begin{proof}
By \cite[Lemma~2.1]{AT}, every point of maximal multiplicity in a supersolvable line arrangement is modular. Since $m(\mathcal L)=4$, the arrangement $\mathcal L$ therefore admits a modular point of multiplicity~$4$, and by the result quoted in \cite[Theorem~1.12(1)]{Sol}, its exponents are $(3,d-4)$, up to order. Hence
\[
\tau(\mathcal L)=(d-1)^2-3(d-4).
\]
On the other hand, a straightforward combinatorial count gives
\[\tau(\mathcal{L}) = n_{2} + 4n_{3} + 9n_{4}
= \frac{d(d-1)}{2} + n_{3} + 3n_{4}.\]
Comparing these two expressions for $\tau(\mathcal{L})$, we obtain
\[n_{3} + 3n_{4} = \frac{d^{2} - 9d + 26}{2}.\]
Finally, Corollary~\ref{bbb} yields the inequality
\[n_{3} + 3n_{4} \leq \frac{4d(d-3)}{15}.\]
Combining the above relations shows that $d \in \{6,7,8,9,10\}$, which completes the proof.
\end{proof}
\begin{corollary}
There are only finitely many combinatorial types of supersolvable line arrangements $\mathcal{L}$ with $m(\mathcal{L})=4$.
\end{corollary}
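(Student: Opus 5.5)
The corollary follows directly from Theorem~\ref{super}, plus the trivial remark that a fixed number of lines admits only finitely many combinatorial types.

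First, Theorem~\ref{super} applies to every supersolvable arrangement with $m(\mathcal L)=4$ and $d\geq 6$ lines, and gives $d\le 10$. The condition $m(\mathcal L)=4$ forces a point lying on four distinct lines, so $d\geq 4$. The leftover cases $d\in\{4,5\}$ must be handled separately, but they are trivially finite. So in every case $4\le d\le 10$, and the degree of such an arrangement is bounded.

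Second, a combinatorial type is determined by the intersection lattice. Concretely, it is determined by the incidence structure: which subsets of the $d$ labelled lines meet in a common point. For fixed $d$ this is a collection of subsets of $\{1,\dots,d\}$, so there are at most $2^{2^d}$ possibilities, which is finite. Taking the union over $4\le d\le 10$ still gives a finite set of combinatorial types, and this proves the corollary.

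There is no real obstacle here. The only point needing care is to read ``combinatorial type'' as the intersection lattice (equivalently, the incidence data up to relabelling), so that it is a finite object once $d$ is fixed. One should also note explicitly that Theorem~\ref{super} assumes $d\ge 6$, so that the small-degree cases are covered by the trivial bound rather than by the theorem.
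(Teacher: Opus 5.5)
Your proposal is correct and follows the paper's argument: the paper gives no separate proof and treats the corollary as immediate from Theorem~\ref{super}, since bounded degree leaves only finitely many intersection lattices. You also make explicit the cases $d\in\{4,5\}$, which the hypothesis $d\geq 6$ of Theorem~\ref{super} excludes and the paper leaves tacit; this is a welcome bit of extra care.
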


\section{A lower bound on the total Tjurina numbers of some conic-line arrangements}
We now prove the main result of the paper.
\begin{theorem}
\label{main}
Let $\mathcal{CL} \subset \mathbb{P}^{2}$ be a free arrangement of $d$ lines and $k$ conics admitting only ordinary quasi-homogeneous singularities. Then
$$\tau(\mathcal{CL}) \geq 3k(k-1) + 3kd + \frac{3(d-1)^2}{4}.$$
Moreover, this bound is sharp.
\end{theorem}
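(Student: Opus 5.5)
The plan is to derive the bound directly from the du~Plessis--Wall description of free curves (Theorem~\ref{dup}), so that the combinatorial hypothesis on the singularities is not needed for the inequality itself. Put $m = 2k + d$, the degree of $\mathcal{CL}$. Since $\mathcal{CL}$ is free, it has exponents $(d_1,d_2)$ with $d_1 \le d_2$ and $d_1 + d_2 = m-1$, where $d_1 = {\rm mdr}(f)$. In particular $d_1 \le (m-1)/2 < m/2$, so we are in case a) of Theorem~\ref{dup}. Freeness gives equality there, so
\[
\tau(\mathcal{CL}) = (m-1)(m-d_1-1) + d_1^2 = (m-1)^2 - d_1 d_2 .
\]

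The second step is to bound the product of the exponents. For fixed sum $d_1 + d_2 = m-1$, the AM--GM inequality gives $d_1 d_2 \le (m-1)^2/4$. Hence $\tau(\mathcal{CL}) \ge \tfrac{3}{4}(m-1)^2$. Substituting $m - 1 = 2k + (d-1)$ and expanding gives
\[
\tfrac34\bigl(4k^2 + 4k(d-1) + (d-1)^2\bigr) = 3k^2 - 3k + 3kd + \tfrac{3(d-1)^2}{4},
\]
which is exactly $3k(k-1) + 3kd + \tfrac{3(d-1)^2}{4}$. The hypothesis of ordinary quasi-homogeneous singularities enters only to make $\tau$ computable as $\sum_{r\ge2}(r-1)^2 n_r$ in concrete examples.

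The remaining, and really the only nontrivial, part is sharpness. Equality forces $d_1 = d_2 = (m-1)/2$, so $m$ must be odd, i.e.\ $d$ must be odd. I would first exhibit small examples and check freeness by verifying $\sum_{r}(r-1)^2 n_r = (m-1)^2 - d_1 d_2$ with $d_1 = d_2$, after confirming ${\rm mdr}(f) = (m-1)/2$.
\begin{itemize}
\item For $k = 0$, $d = 3$: the triangle $xyz = 0$ has $n_2 = 3$, so $\tau = 3$. It is free with exponents $(1,1)$, and the bound is $3 \cdot 4/4 = 3$.
\item For $k \ge 1$: I would look for free conic-line arrangements with balanced exponents among those studied in \cite{Pok}, such as pencils of conics together with suitable lines through base points. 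I would then compute their $n_r$ to confirm equality.
\end{itemize}
The main obstacle is producing such a balanced free example with $k \ge 1$ and only ordinary quasi-homogeneous points. Without it, sharpness is established only in the line case $k = 0$.
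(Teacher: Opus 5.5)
Your argument is correct, and for the inequality it is a much shorter route than the paper's. The paper also starts from the freeness identity $\tau=d_1^2-d_1(m-1)+(m-1)^2$. It then uses the ordinary quasi-homogeneous hypothesis to write $\tau=\sum_r(r-1)^2n_r$, and the naive count $4\binom{k}{2}+2kd+\binom{d}{2}=\sum_r\binom{r}{2}n_r$ to turn the identity into a quadratic equation in $d_1$ with coefficients expressed through the $n_r$. From $\triangle_{d_1}\ge 0$ it deduces $n_2+n_3\le\frac32(d-1)+\frac12\sum_{r\ge5}(r-1)(r-4)n_r$, and combines this again with the naive count to reach the bound. However, the discriminant of $d_1^2-(m-1)d_1+(m-1)^2-\tau$ is exactly $4\tau-3(m-1)^2$. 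So the paper's condition $\triangle_{d_1}\ge0$ is literally your AM--GM step $d_1d_2\le(m-1)^2/4$, and the later steps only add and subtract the combinatorial identity; the final right-hand side is $\frac34(2k+d-1)^2$. Your version therefore proves the bound for every free reduced curve of degree $2k+d$, with no hypothesis on the singularities. It also identifies the equality case $d_1=d_2$ (forcing $d$ odd), which makes the paper's subsequent corollary $d_1d_2\le(d-1)^2/4$ and its equality criterion immediate. What the paper's detour buys is the intermediate inequality for $n_2+n_3$: a conic-line analogue of the bound $n_2+n_3\le\frac32(d-1)$ for free line arrangements with $m(\mathcal{L})=4$ used in Section 3. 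That inequality does need the singularity hypothesis.

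Your sharpness argument is also sufficient as it stands. The theorem only claims that equality occurs, $k=0$ is allowed (the paper's corollary for line arrangements is exactly that case), and the triangle attains the bound. The $k\ge1$ example you were missing is the one the paper cites: a conic through the three vertices of a triangle, e.g. $f=xyz(xy+yz+zx)$. It has exactly three ordinary triple points, so $\tau=12=\frac34\cdot4^2$. Freeness with exponents $(2,2)$ can be checked by hand: $\bigl(x(3x+2y),-y(2x+3y),-2z(x-y)\bigr)$ and $\bigl(x(3x+2z),-2y(x-z),-z(2x+3z)\bigr)$ are non-proportional Jacobian syzygies of degree $2$, and $2+2=m-1$.
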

\begin{proof}
Let ${\rm deg}(\mathcal{CL}) = 2k+d$ and $d_{1} = {\rm mdr}(\mathcal{CL})$. The freeness of $\mathcal{CL} \subset \mathbb{P}^{2}$ implies that the following equality holds:
\begin{equation}
\label{dpp}
d_{1}^{2} - d_{1}(2k+d-1) +(2k+d-1)^2 = \tau(\mathcal{CL}) = \sum_{r\geq 2}(r-1)^{2}n_{r}.
\end{equation}
Recall that the following na\"ive combinatorial count holds:
\begin{equation}
\label{ncc}
 4\binom{k}{2}+2kd +\binom{d}{2} = \sum_{r\geq 2}\binom{r}{2}n_{r},
\end{equation}
which can be written as
\begin{equation}
(2k+d-1)^{2}+d-1 = f_{2}-f_{1}
\end{equation}
where $f_{i}=\sum_{r\geq 2}r^{i}n_{r}$.
Let us come back to \eqref{dpp}. We can rewrite it as
\[d_{1}^{2}-d_{1}(2k+d-1)+(2k+d-1)^{2} = f_{2}-2f_{1}+f_{0} = (2k+d-1)^{2}+d-1-f_{1}+f_{0}\]
and hence we arrive at
\[d_{1}^{2}-d_{1}(2k+d-1) + \bigg(\sum_{r\geq 2}(r-1)n_{r}-d+1 \bigg) = 0.\]
We compute the discriminant of the above equation in $d_{1}$: 
\begin{equation}
\label{cruc}
\triangle_{d_{1}} := (2k+d-1)^2 - 4(1-d) -4\sum_{r\geq 2}(r-1)n_{r}.
\end{equation}
Since $\mathcal{CL}$ is free we have $\triangle_{d_{1}}\geq 0$, and this gives us
\[(2k+d)^2 + 2d-4k-3 \geq \sum_{r\geq 2}(4r-4)n_{r}= (2k+d)^2-4k-d-\sum_{r\geq 2}(r^{2}-5r+4)n_{r}.\]
Hence
\begin{equation}
\label{n5}
\frac{3}{2}(d-1) + \frac{1}{2}\sum_{r\geq 5}(r^{2}-5r+4)n_{r} \geq n_{2}+n_{3}.
\end{equation}
Let us focus again on \eqref{ncc}, we have
\[\frac{4k(k-1)+4kd+d(d-1)}{2} = \sum_{r\geq 2}\frac{r(r-1)}{2}n_{r} = n_{2} + n_{3} + 2n_{3} + \sum_{r\geq 4} \frac{r(r-1)}{2}n_{r} \stackrel{\eqref{n5}}{\leq} \]
\[\frac{3}{2}(d-1) + 2n_{3} + \sum_{r\geq 4}\frac{r^{2}-5r+4}{2}n_{r} + \sum_{r\geq 4}\frac{r(r-1)}{2}n_{r} = \]
\[\frac{3}{2}(d-1)+2n_{3} + \sum_{r\geq 4}(r^{2}-3r+2)n_{r},\]
hence
\begin{equation}
\label{nn}
\frac{4k(k-1)+4kd+d^{2}-4d+3}{4}\leq n_{3} + \frac{1}{2}\sum_{r\geq 4}(r^{2}-3r+2)n_{r}.
\end{equation}
We now turn to estimating the total Tjurina number of $\mathcal{CL}$. We have
\begin{multline*}
\tau(\mathcal{CL}) = \sum_{r\geq 2}(r-1)^{2}n_{r} = \sum_{r\geq 2}\frac{r(r-1)}{2}n_{r} + \sum_{r \geq 2}\frac{r^{2}-3r+2}{2}n_{r} \stackrel{\eqref{nn}}{\geq} \\ \frac{4k(k-1)+4kd+d(d-1)}{2} + \frac{4k(k-1)+4dk+d^2-4d+3}{4} = \\ 3k(k-1)+3kd + \frac{3(d-1)^{2}}{4}. 
\end{multline*}
To see that the bound is sharp, let us recall that there exists a unique (up to projective equivalence) free arrangement $\mathcal{CL}_{5}$ of $d=3$ lines and $k=1$ conic such that $n_{3}=3$, see \cite[Example~4.14 and Theorem~5.7]{DP22}. In this case we have $\tau(\mathcal{CL}_{5})=12$, and
\[
3k(k-1) + 3kd + \frac{3(d-1)^2}{4}
= 0 + 9 + \frac{3\cdot4}{4}
= 12.
\]
Hence we obtain equality, which shows that the bound is sharp.
\end{proof}
\begin{corollary}
If $\mathcal{L} \subset \mathbb{P}^{2}$ is a free arrangement of $d\geq 2$ lines, then
$$\tau(\mathcal{L}) \geq \frac{3(d-1)^2}{4}.$$
\end{corollary}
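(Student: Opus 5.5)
The corollary should follow by specializing Theorem~\ref{main} to $k=0$. For that, I need to check that a free line arrangement satisfies the hypotheses of Theorem~\ref{main}. A reduced line arrangement has only ordinary singularities: at each intersection point the $r$ lines through it are pairwise transversal. An ordinary singularity with $r$ smooth, pairwise transversal linear branches is locally given by a product of $r$ distinct linear forms in two variables. Such a form is homogeneous, hence quasi-homogeneous, so $\tau_p=\mu_p=(r-1)^2$. Thus a free arrangement of $d$ lines is a free conic-line arrangement with $k=0$ conics and only ordinary quasi-homogeneous singularities.

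Setting $k=0$ in the inequality of Theorem~\ref{main}, the terms $3k(k-1)$ and $3kd$ vanish, which gives $\tau(\mathcal L)\geq \frac{3(d-1)^2}{4}$.

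The only point needing care is the role of $d\geq 2$. Theorem~\ref{main} as stated has no lower bound on the degree, so I would make sure its proof is valid for small $d$. That proof uses only the du Plessis--Wall equality for free curves (Theorem~\ref{dup}), the naïve count \eqref{ncc}, and the nonnegativity of the discriminant \eqref{cruc}. The discriminant is nonnegative because $d_1$ is a real root of the quadratic. All of these hold for any $d\geq 2$ with $k=0$.

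For completeness I would sanity-check the degenerate cases directly. For $d=2$ there is one double point, so $\tau=1\geq 3/4$. For a pencil of $d$ lines through a point, $\tau=(d-1)^2\geq \frac{3(d-1)^2}{4}$.

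No step is a real obstacle here. The only thing to state explicitly is that ordinary points of line arrangements are quasi-homogeneous, so that Theorem~\ref{main} applies verbatim.
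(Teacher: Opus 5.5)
Your proposal is correct and follows the paper's route: the paper states this corollary without a separate proof, as the $k=0$ case of Theorem~\ref{main}, whose argument (the du Plessis--Wall equality for free curves, the na\"ive count, and nonnegativity of the discriminant) works verbatim for line arrangements. The one hypothesis needing a check is that points of a line arrangement are ordinary and homogeneous, hence quasi-homogeneous with $\tau_p=(r-1)^2$, and you verify this correctly.
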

The obtained lower bound is quite accurate, as the following example shows. Let us recall that the Klein arrangement $\mathcal{K}$ of $21$ lines is a free line arrangement such that $n_{3}=28$ and $n_{4}=21$, see \cite[Chapter 6]{OT92}.
\begin{example}
For $d=21$ we have $\tau(\mathcal{L})\geq 300$. We can easily compute that for the Klein arrangement $\mathcal{K}$ of $21$ lines we have $\tau(\mathcal{K}) = 301$.
\end{example}
Based on the above considerations, we can formulate the following difficult problem.
\begin{problem}
\label{prob}
Classify geometrically free line arrangements $\mathcal{L} \subset \mathbb{P}^2$
with fixed degree $d = |\mathcal{L}|$ and fixed maximal multiplicity
$m(\mathcal{L})$. In particular, determine all possible weak combinatorial types and the corresponding Tjurina numbers $\tau(\mathcal{L})$ under these constraints.
\end{problem}
As a concrete example, consider the case $d = 21$ with $m(\mathcal{L}) = 4$. It is known that
\[\tau(\mathcal{L}) \in \{300,301\},\]
and that there exist $21$ weak combinatorial types of such arrangements. Among these is the Klein arrangement. A natural question therefore arises: which of the remaining combinatorial types, aside from the Klein combinatorics, are geometrically realizable as free arrangements?

\begin{corollary}
Let $\mathcal{L}\subset \mathbb{P}^{2}$ be a free arrangement of $d$ lines such that $n_{d} = 0$ with exponents $(d_{1},d_{2})$. Then $$d_{1}d_{2} \leq \frac{(d-1)^{2}}{4}.$$
Moreover, this bound is sharp if and only if $d_{1}=d_{2}$.
\end{corollary}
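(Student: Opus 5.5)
The plan is to rewrite the freeness identity \eqref{dpp} in terms of the exponents and then compare with the previous Corollary. For a free arrangement of $d$ lines with exponents $(d_1,d_2)$ we have $d_1+d_2=d-1$. Theorem \ref{dup} a), or equivalently \eqref{dpp} with $k=0$, gives
\[
\tau(\mathcal{L}) = (d-1)^2 - d_1(d-1-d_1) = (d-1)^2 - d_1d_2 .
\]
Line arrangements only have ordinary singularities, which are quasi-homogeneous, so the previous Corollary applies and yields $\tau(\mathcal{L})\geq \frac{3(d-1)^2}{4}$. Substituting gives
\[
d_1d_2 \leq (d-1)^2-\tfrac{3}{4}(d-1)^2=\tfrac{(d-1)^2}{4}.
\]
The assumption $n_d=0$ excludes the pencil. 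The pencil has exponents $(0,d-1)$, so there $d_1d_2=0$ and the statement would be trivially true anyway; the hypothesis mainly keeps us in the genuinely two-dimensional situation.

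Alternatively, the inequality follows directly from AM--GM: $d_1d_2\leq \big(\frac{d_1+d_2}{2}\big)^2=\frac{(d-1)^2}{4}$. I would present both routes, the first to show consistency with Theorem \ref{main}.

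For the sharpness statement, I read ``sharp'' as equality $d_1d_2=\frac{(d-1)^2}{4}$. By AM--GM this holds exactly when $d_1=d_2=\frac{d-1}{2}$, which forces $d$ to be odd. Conversely, if $d_1=d_2$, then $d_1d_2=\big(\frac{d-1}{2}\big)^2$, so equality holds. In terms of the Tjurina number, equality is the same as equality in the lower bound of the previous Corollary. By the derivation in Theorem \ref{main}, that happens exactly when the discriminant $\triangle_{d_1}$ in \eqref{cruc} vanishes, i.e. when the quadratic in $d_1$ has a double root, $d_1=d_2$.

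The only delicate point is the interpretation of ``sharp''. I would state it explicitly as equality attained precisely when the exponents coincide, which requires $d$ odd. I would illustrate it by an example: the arrangement $\mathcal{A}_1(9)$ has exponents $(4,4)$, giving $16=\frac{64}{4}$.
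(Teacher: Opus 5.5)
Your first route is the paper's proof. Freeness gives $(d-1)^2=d_1d_2+\tau(\mathcal{L})$, and the preceding Corollary gives $\tau(\mathcal{L})\geq \frac{3}{4}(d-1)^2$.

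Your AM--GM route is genuinely different and more economical. It uses only $d_1+d_2=d-1$, and needs neither the hypothesis $n_d=0$ nor any Tjurina estimate. It also proves both directions of the equality claim. The paper only checks that $d_1=d_2$ gives equality and never justifies the ``only if'' direction. Your discriminant remark is also correct: after the inequality $\triangle_{d_1}\geq 0$, every step in the proof of Theorem~\ref{main} is a reformulation. So equality there holds exactly when $\triangle_{d_1}=0$, and the two roots of that quadratic are $d_1$ and $d_2$.

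Your AM--GM argument also shows that the dependence runs the other way. Combined with $\tau=(m-1)^2-d_1d_2$, it gives $\tau\geq\frac34(m-1)^2$ for every free curve of degree $m$. Since $\frac34(2k+d-1)^2=3k(k-1)+3kd+\frac{3(d-1)^2}{4}$, the preceding Corollary, and indeed the bound of Theorem~\ref{main}, is equivalent to the inequality you are proving.

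One concrete error to fix: your illustrating example is wrong. The arrangement $\mathcal{A}_1(9)$ has $n_2=6$, $n_3=4$, $n_4=3$, so $\tau=6+16+27=49$ and $d_1d_2=64-49=15$. Its exponents are therefore $(3,5)$, not $(4,4)$, and it does not attain the bound. This is consistent with it being supersolvable with a modular quadruple point, which gives exponents $(3,d-4)$ as in the proof of Theorem~\ref{super}. An example attaining equality is the dual Hesse arrangement, the one the paper uses: $9$ lines, $n_3=12$, $\tau=48$, exponents $(4,4)$.
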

\begin{proof}
Since $\mathcal{L}$ is free we get $(d-1)^{2} = d_{1}d_{2}+\tau(\mathcal{L}) \geq d_{1}d_{2} + \frac{3}{4}(d-1)^{2}$.
Moreover, if $d_{1}=d_{2}$, then $2d_{1}=d-1$, and the equality follows.
\end{proof}
\begin{example}
The dual Hesse arrangement $\mathcal{H}$ consists of $d=9$ lines with $n_{3}=12$ triple points. 
We know that $\mathcal{H}$ is free with exponents $(d_{1},d_{2})=(4,4)$. Moreover,
\[
d_{1}d_{2}=4\cdot4=16
\quad\text{and}\quad
\frac{(d-1)^{2}}{4}=\frac{8^{2}}{4}=16.
\]
Hence $d_{1}d_{2}=\frac{(d-1)^{2}}{4}$, so the bound is sharp.
\end{example}
\section{An application to surfaces with only isolated singularities}
We apply our lower bound on the total Tjurina number of conic-line arrangements with quasi-homogeneous singularities from Theorem \ref{main} to construct surfaces in $\mathbb{P}^3$ with only isolated singularities and arbitrarily large total Tjurina numbers. 

Let us denote by $R = \mathbb{C}[x,y,z,w]$ the graded ring of polynomials in four variables. Recall that for a reduced surface $X = \{F(x,y,z,w)=0\} \subseteq \mathbb{P}^{3}$ we define
$$d_{1}' = {\rm mdr}(F) = {\rm min}\{r : {\rm AR}(F)_{r}\neq 0\},$$
where
$${\rm AR}(F) = \{ (a_{1},a_{2},a_{3},a_{4})\in R^{4} : a_{1}\partial_{x}F + a_{2}\partial_{y}F + a_{3}\partial_{z}F + a_{4}\partial_{w}F = 0\}.$$
We have the following general result due to du Plessis and Wall \cite[Theorem 5.4]{DPP2}.
\begin{theorem}
If $X = \{F(x,y,z,w)=0\}\subset\mathbb{P}^{3}$ is a reduced surface of degree $D$ with only isolated singularities, then
\[(D-1)^{3}-d_{1}'(D-1)^{2} \leq \tau(X) \leq (D-1)^{3}-d_{1}'(D-d_{1}'-1)(D-1).\]
\end{theorem}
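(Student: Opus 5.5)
The plan is to compare the singular scheme $\Sigma=V(\partial_xF,\partial_yF,\partial_zF,\partial_wF)\subset\mathbb{P}^3$ with a complete intersection, by liaison. Since $X$ has only isolated singularities, $\Sigma$ is a finite scheme, and its length is exactly $\tau(X)$. Take three general linear combinations $G_1,G_2,G_3$ of the four partials. Because the base locus of this linear system is the finite set $\Sigma$, a Bertini-type dimension count shows that $Z=V(G_1,G_2,G_3)$ is a zero-dimensional complete intersection of length $(D-1)^3$ containing $\Sigma$. Let $W$ be the residual scheme, so that $I_W=I_Z:I_\Sigma$ and
\[
\deg W=(D-1)^3-\tau(X).
\]
Both inequalities then become statements about $\deg W$: the lower bound on $\tau(X)$ means $\deg W\le d_1'(D-1)^2$, and the upper bound means $\deg W\ge d_1'(D-d_1'-1)(D-1)$.

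\textbf{Lower bound on $\tau(X)$.} Let $(a_1,\dots,a_4)$ be a nonzero syzygy of degree $r=d_1'$. Let $G_4$ be a fourth general combination, completing $G_1,G_2,G_3$ to a basis of the span of the partials.
\begin{enumerate}
\item Rewrite the syzygy in the new basis as $\sum b_iG_i=0$ with $\deg b_i=r$.
\item Then $b_4G_4\in(G_1,G_2,G_3)$. Since $G_4$ generates $I_\Sigma$ modulo $I_Z$ generically, this places $b_4$ in $I_Z:G_4=I_W$.
\item If $b_4\equiv0$ for every general choice, the syzygy would be a syzygy among only three general partials. When $r<D-1$ it would then be Koszul and hence zero, so one can arrange $b_4\neq0$. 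Since $\deg b_4=r<D-1$ in that range, $b_4\notin(G_1,G_2,G_3)$.
\item For general $G_1,G_2$, the scheme $V(G_1,G_2,b_4)$ is finite. It contains $W$, since $W\subset Z\subset V(G_1,G_2)$ and $b_4\in I_W$. Hence $\deg W\le r(D-1)^2$, which is the left inequality.
\end{enumerate}
The degenerate range $r\ge D-1$ makes the left-hand side at most $0$, so it is trivial.

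\textbf{Upper bound on $\tau(X)$.} This is where the work lies, and I expect it to be the main obstacle. The plan is to run the liaison argument backwards through the Hilbert functions. Cut by a general linear form to reduce $R/I_Z$ to an Artinian complete intersection with symmetric $h$-vector and socle degree $3(D-2)$. Under this reduction:
\begin{itemize}
\item the absence of syzygies in degree $<d_1'$ translates into $(I_W)_j=(G_1,G_2,G_3)_j$, or more precisely that $I_W$ has no new generators, for $j<d_1'$;
\item Cayley--Bacharach symmetry links $h_W(j)$ to $h_Z(3(D-2)-j)-h_\Sigma(3(D-2)-j)$.
\end{itemize}
Summing the resulting forced values of $\Delta h_W$ over the appropriate degree windows should give $\deg W\ge d_1'(D-d_1'-1)(D-1)$.

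The difficulty is that a naive count, using only degrees $j<d_1'$, yields a binomial expression such as $\binom{d_1'+2}{3}$, which is too weak. The sharper product form $d_1'(D-1-d_1')(D-1)$ suggests a different approach. First, restrict to a general plane section, where $\Sigma$ meets the plane in nothing: it is finite and the plane is general. There, use the codimension-two structure of the ideal generated by the three partials of the restricted polynomial. Then apply the curve-case du Plessis--Wall estimate of Theorem~\ref{dup} in the form $(D-1)(D-1-d_1)+d_1^2-\ldots$, multiplied by the $D-1$ layers coming from the third complete-intersection direction. That is, I would first try to prove $\deg W\ge (D-1)\cdot\#\{\text{residual points in a general plane pencil}\}$ by slicing $Z$ with $G_3$ and invoking the plane-curve bound on $V(G_1,G_2)\cap H$, checking that $\mathrm{mdr}$ does not drop under the restriction.
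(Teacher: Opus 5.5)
The paper gives no proof of this statement; it is quoted from du Plessis--Wall \cite[Theorem 5.4]{DPP2}, so your argument has to stand on its own.

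\textbf{Lower bound.} Your liaison set-up is correct: $\ell(\Sigma)=\tau(X)$, $\deg W=(D-1)^3-\tau(X)$, and $b_4\in I_Z:G_4=I_Z:I_\Sigma=I_W$ because $I_Z$ is saturated. The left-hand inequality essentially goes through, but step~4 needs repair. It is not a Bertini statement, because $b_4$ is not independent of $G_1,G_2$. Up to a scalar, $b_4=\sum_i c_ia_i$, where $c$ spans the common kernel of the coefficient vectors $\alpha_1,\alpha_2,\alpha_3$ of $G_1,G_2,G_3$. What you need is an incidence count:
\begin{itemize}
\item off $\Sigma$, the curve $V(G_1,G_2)$ is where $\nabla F(p)\in P:=\langle\alpha_1,\alpha_2\rangle^{\perp}$;
\item if $b_4$ vanished along a component, then $a(p)\perp P$ along it;
\item this fails for a general $2$-plane $P$, because the entries of a minimal syzygy are coprime, so $\dim V(a_1,\dots,a_4)\le 1$.
\end{itemize}

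\textbf{Upper bound: the genuine gap.} For the right-hand inequality you give a plan, not a proof, and the plan cannot work.
\begin{itemize}
\item The Hilbert-function count only sees degrees $<d_1'$, which is why it gives only $\binom{d_1'+2}{3}$.
\item Slicing by a general plane $H$ destroys the relevant data. $X\cap H$ is a smooth curve: its Jacobian ideal is a complete intersection, its $\mathrm{mdr}$ is $D-1$ rather than $d_1'$, and its Tjurina number is $0$. So Theorem~\ref{dup} says nothing useful.
\item The relation $\sum a_i\partial_iF=0$ does not restrict to a syzygy among the three partials of $F|_H$, since it involves the normal derivative.
\item $V(G_1,G_2)\cap H$ is just $(D-1)^2$ points, not the singular scheme of any plane curve.
\end{itemize}

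\textbf{The missing idea.} Use a second component of the syzygy on the complete intersection curve $\Gamma=V(G_1,G_2)$, where $b_3G_3+b_4G_4=0$. For general choices, $G_3,G_4,b_3,b_4$ are nonzerodivisors on the Cohen--Macaulay curve $\Gamma$. Then locally $(G_3):G_4=(b_4):b_3$: multiply by $b_3$ (resp.\ $G_3$) and cancel $G_4$ (resp.\ $b_4$). Computing lengths on both sides with $0\to A/((g):h)\xrightarrow{\,h\,}A/(g)\to A/(g,h)\to 0$ gives
\[
\deg W=\ell\bigl(\Gamma\cap V(b_4)\bigr)-\ell\bigl(\Gamma\cap V(b_3,b_4)\bigr)=d_1'(D-1)^2-\ell\bigl(V(G_1,G_2,b_3,b_4)\bigr).
\]
This reproves your left inequality at once. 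Moreover, $V(G_1,G_2,b_3,b_4)\subset V(G_1,b_3,b_4)$, and for general choices the latter is a finite complete intersection of length $(D-1)(d_1')^{2}$; this again needs an incidence count over flags $\langle\alpha_1\rangle\subset\langle\alpha_1,\alpha_2\rangle$. Hence $\deg W\ge d_1'(D-1-d_1')(D-1)$, which is exactly the upper bound on $\tau(X)$. The factor $D-1$ that you tried to get from ``layers'' of plane slices actually comes from the polar surface $V(G_1)$ of degree $D-1$.
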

In light of the above result, we present our contribution toward bounding the total Tjurina number of certain surfaces that is in the spirit of \cite[Remark 3.2]{DS26}. The main advantage of our result is that the bound does not depend explicitly on the homological data associated with the surface.
\begin{proposition}
\label{sur}
Let $C = \{f(x,y,z) =0\} \subset \mathbb{P}^{2}$ be an arrangement of $k$ conics and $d$ lines with only ordinary quasi-homogeneous singularities. Assume that $C$ is free. Consider the surface
$$X = \{F(x,y,z,w) = f(x,y,z)+w^{2k+d} = 0\} \subset \mathbb{P}^{3}.$$
Then $X$ has only isolated singularities and
$$\tau(X) \geq (2k+d-1)\cdot\bigg(3k(k-1) + 3kd + \frac{3(d-1)^2}{4}\bigg).$$
\end{proposition}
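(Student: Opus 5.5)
Set $m = 2k+d = \deg f$. The plan is to show that the singularities of $X$ sit exactly over those of $C$, that each one is a suspension $g(u,v)+w^{m}$ of a plane curve singularity, and to sum the local Tjurina numbers. Then Theorem~\ref{main} gives the bound.

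\emph{Isolatedness.} The partial derivatives are $F_x=f_x$, $F_y=f_y$, $F_z=f_z$ and $F_w = m w^{m-1}$. So a singular point of $X$ must satisfy $w=0$ and $f_x=f_y=f_z=0$; by the Euler formula this also forces $f=0$. The point $(0:0:0:1)$ is not singular, since there $F_w\neq 0$. Hence
\[
\operatorname{Sing}(X)=\{(x:y:z:0) : (x:y:z)\in\operatorname{Sing}(C)\},
\]
which is finite because $C$ is reduced. Thus $X$ has only isolated singularities, in bijection with $\operatorname{Sing}(C)$.

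\emph{Local computation.} Take $p\in\operatorname{Sing}(C)$ and its image $P=(p:0)$. Choose an affine chart, say $z=1$, in which $p=(0,0)$, and write $g(x,y)=f(x,y,1)$. Near $P$ the surface is then given by $G(x,y,w)=g(x,y)+w^{m}$. The singularity of $g$ at $p$ is ordinary quasi-homogeneous, so after an analytic change of coordinates $g$ is quasi-homogeneous. Adding $w^m$ keeps $G$ quasi-homogeneous, with weight $1/m$ assigned to $w$. By Saito's theorem $\tau_P(X)=\mu_P(X)$, and the Thom--Sebastiani formula gives
\[
\mu_P(X)=\mu_p(g)\,\mu(w^m)=(m-1)\,\mu_p(C)=(m-1)\,\tau_p(C).
\]
One can also see this directly: $G$ lies in the Jacobian ideal, and the Milnor algebra splits as a tensor product of $\mathbb{C}\{x,y\}/(g_x,g_y)$ and $\mathbb{C}\{w\}/(w^{m-1})$. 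Summing over all $p$ gives $\tau(X)=(m-1)\tau(C)$.

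\emph{Conclusion.} Since $C$ is free with only ordinary quasi-homogeneous singularities, Theorem~\ref{main} gives $\tau(C)\ge 3k(k-1)+3kd+\tfrac{3(d-1)^2}{4}$. Multiplying by $m-1=2k+d-1$ gives the claimed inequality; the argument actually yields equality with $(m-1)\tau(C)$.

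The only delicate step is the local one. It needs the local analytic coordinates at $p$ in which $g$ is quasi-homogeneous, and the observation that the change of coordinates in $(x,y)$ extends to one in $(x,y,w)$ fixing $w$, so that $G$ keeps the form $g+w^m$. Once this is in place, Saito's theorem together with the Thom--Sebastiani formula settles it.
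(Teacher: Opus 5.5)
Your proof is correct and follows the same route as the paper: you place the singularities of $X$ over $\operatorname{Sing}(C)$ at $w=0$, apply Thom--Sebastiani at each point to get $\tau_P(X)=(2k+d-1)\,\tau_p(C)$, and then invoke Theorem~\ref{main}. Your version is more careful, because the paper applies a ``na\"ive'' Thom--Sebastiani principle directly to $\tau$, which is not multiplicative in general, whereas you justify it through $g\in(g_x,g_y)$ (i.e.\ $\tau_p=\mu_p$), so that $G\in J(G)$ and $\tau_P(X)=\mu_P(X)=\mu_p(g)\,\mu(w^{m})$.
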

\begin{proof}
The fact that $X$ has only isolated singularities follows from straightforward computations of its partial derivatives. To obtain a lower bound on 
$\tau(X)$, we note that the problem is local. Hence, we perform computations around each singular point, and we have a na\"ive Thom-Sebastiani principle for isolated singularities, namely
$$\tau\bigg(f(x,y,z) + w^{2k+d}\bigg) = \tau(f(x,y,z))\cdot\tau(w^{2k+d})= (2k+d-1)\cdot\tau(f(x,y,z)),$$
which completes the proof.
\end{proof}
%%%%%%%%%%%%%%%%%%%%%%%%%%%%%%%%%%%%%%%%%%%%%%%%%%%%%%%%%%%%%%%%%%%%%%%%%%%%%%%%%%%%%%%%%%%%%%%%%%%%%%%%%%%%%%%%%%%%%%%%%%%%%%%%%%%%%%%%%%%%%%%%%%%%%%%%%%%%%%%%%%%%%
\section*{Acknowledgments}
I would like to warmly thank Alex Dimca for all comments regarding the content of the paper and for suggesting Theorem \ref{super}.
\section*{Funding}
Piotr Pokora is supported by Narodowe Centrum Nauki (Poland) Sonata Bis Grant  
\begin{center}
\textbf{2023/50/E/ST1/00025}.    
\end{center} For the purpose of Open Access, the author has applied a CC-BY public copyright license to any Author Accepted Manuscript (AAM) version arising from this submission.

\section*{Conflict of interest statement}
There are no conflicts of interest to declare.

\section*{Data availability statement}
Data sharing is not applicable to this article as no data sets were generated or analyzed during this study.
%%%%%%%%%%%%%%%%%%%%%%%%%%%%%%%%%%%%%%%%%%%%%%%%%%%%%%%%%%%%%%%%%%%%%%%%%%%%%%%%%%%%%%%%%%%%%%%%%%%%%%%%%%%%%%%%%%%%%%%%%%%%%%%%%%%%%%%%%%%%%%%%%%%%%%%%%%%%%%%%%%%%%

\bigskip
%Affiliation of the author
Piotr Pokora\\
\noindent
Department of Mathematics,
University of the National Education Commission Krakow,
Podchor\c a\.zych 2,
PL-30-084 Krak\'ow, Poland. \\
Email: \texttt{piotr.pokora@uken.krakow.pl}
\end{document}